\def\BibTeX{{\rm B\kern-.05em{\sc i\kern-.025em b}\kern-.08em
    T\kern-.1667em\lower.7ex\hbox{E}\kern-.125emX}}
\pgfplotsset{compat=newest}
\definecolor{r}{rgb}{0.6350, 0.0780, 0.1840}
\definecolor{g}{rgb}{0.4660, 0.6740, 0.1880}
\definecolor{b}{rgb}{0.3010, 0.7450, 0.9330}
\newtheorem{mydefn}{Definition}
\newtheorem{lem}{Lemma}
\newtheorem{prob}{Problem}
\newtheorem{assm}{Assumption}
\newcommand{\Prob}{\mathbb{P}}
\newcommand{\R}{\mathbb{R}}
\newcommand{\Ex}{\mathbb{E}}
\title{Approximate Quantiles for Stochastic Optimal Control of LTI Systems with Arbitrary Disturbances}
\author{Shawn Priore, Christopher Petersen, and Meeko Oishi 
    \thanks{This material is based upon work supported by the National Science Foundation under NSF Grant Number CMMI-2105631.  Any opinions, findings, and conclusions or recommendations expressed in this material are those of the authors and do not necessarily reflect the views of the NSF.  
        \newline \indent
        Shawn Priore and Meeko Oishi are with Electrical and
        Computer Engineering, University of New Mexico, Albuquerque, NM; e-mail: \texttt{shawnpriore@unm.edu, oishi@unm.edu} (corresponding author).
        \newline\indent
        Christopher Petersen is with the Air Force Research Lab, Space Vehicles Directorate, Albuquerque, NM.
    }
}
\begin{document}
\maketitle

\begin{abstract}
    We propose a method for open-loop stochastic optimal control of LTI systems based on Taylor approximations of quantile functions.  This approach enables efficient computation of quantile functions that arise in chance constrained reformulations.  We are motivated by multi-vehicle planning problems for LTI systems with norm-based collision avoidance constraints, and polytopic feasibility constraints. Respectively, these constraints can be posed as reverse-convex and convex chance constraints that are affine in the control and disturbance. We show for constraints of this form, piecewise affine approximations of the quantile function can be embedded in a difference-of-convex program that enables use of conic solvers. We demonstrate our method for multi-satellite coordination with Gaussian and Cauchy disturbances, and provide a comparison with particle control.
\end{abstract}

\section{Introduction}\label{sec:intro}

As missions with multiple space vehicles become commonplace, new technologies are required for effective autonomous operation that enable coordination amongst multiple vehicles in a challenging environment, despite limited resources (such as fuel). 
Autonomy for spacecraft must accommodate the need to plan and optimize under uncertainty, which may arise due to modeling inaccuracies, nonlinearities in sensing and estimation processes, and actuation mechanisms.  
Many of these uncertainties may be stochastic, but may not necessarily follow Gaussian distributions.
Constructing optimal controllers to ensure collision avoidance and performance constraints, despite such stochasticity, requires accurate assessment of risk. In this paper, we seek to construct algorithmically efficient solutions to stochastic optimization problems for cooperative, multi-vehicle problems in potentially non-Gaussian environments.   

Algorithms for stochastic optimization often face significant computational hurdles that can create undesirable trade-offs with accuracy \cite{mesbah2016stochastic}, particularly in systems with limited computation. Particle approaches \cite{calafiore2006scenario,blackmore2011chance} have employed sample reduction techniques for convex \cite{campi2011sampling,care2014fast} and non-convex \cite{Campi2018TAC} problems, but still are subject to tradeoffs between accuracy and computational burden.  Approaches that rely upon moments \cite{nemirovski2006convex,calafiore2006distributionally,paulson2017stochastic} may create excessive conservativism, and typically require an iterative approach to controller synthesis and risk allocation, to circumvent non-convexity that arises in the process of separating joint chance constraints into individual chance constraints via Boole's inequality \cite{oldewurtel2014stochastic,ono2008iterative,vitus_feedback_2011}.  Recent work has employed Fourier transforms in combination with piecewise affine approximations \cite{Sivaramakrishnan2021TAC, vinod2019piecewise}, to evaluate chance constraints without quadrature for linear time-invariant (LTI) systems with disturbance processes that have log-concave probability density functions (pdf). 

Our approach to constrained stochastic optimization of LTI systems with potentially non-Gaussian disturbances is based on approximations of a quantile function.  We consider multi-vehicle planning problems with two types of constraints: a) norm-based collision avoidance constraints, and b) polytopic feasibility constraints.  These forms readily arise when vehicles must avoid each other, as well as static obstacles in the environment, while remaining in some desirable polytopic set and reaching a desired convex target set.   We show that these constraints can be reduced to chance constraints that are affine in the control input and disturbance.  The norm-based constraints yield reverse convex constraints and the feasibility constraints yield convex constraints.  In both cases, assessment of a quantile function, the inverse of the cumulative distribution function (cdf), is necessary to evaluate these constraints.  However, quantile functions are notoriously difficult to compute.

Our approach is to construct a Taylor series approximation of the quantile function that is amenable to arbitrary distributions.  We generate an affine approximation of the quantile by evaluating the Taylor series approximation at regular intervals to yield a piecewise affine constraint that can be embedded within a standard difference-of-convex programming framework \cite{boyd_dc_2016}.  We employ an iterative approach as in \cite{ono2008iterative}, \cite{PrioreACC21}, to allocate risk and synthesize an optimal control. Although iterative, our approach can be considerably faster than a particle approach because it exploits convexity.
{\em The main contribution of this paper is the construction of a first-order quantile approximation that enables efficient evaluation of chance constraints.}  Our approach relies upon affine structure in the collision avoidance and feasibility chance constraints.

The paper is organized as follows. Section \ref{sec:prelim} provides mathematical preliminaries and formulates the optimization problem. Section \ref{sec:methods} reformulates the chance constraints by approximating the quantile function.  Section \ref{sec:results} demonstrates our approach on two multi-satellite rendezvous problems, and Section \ref{sec:conclusion} provides concluding remarks.

\section{Preliminaries and problem formulation} \label{sec:prelim}

We denote the interval that enumerates all natural numbers from $a$ to $b$, inclusively, as $\mathbb{N}_{[a,b]}$.  Random vectors are indicated with a bold case $\boldsymbol v$ and non-random vectors with an overline $\overline{v}$. We presume $I_{n}$ represents an identity matrix of size $n$, and $0_{n\times m}$ represents a $n\times m$ matrix of zeros. We denote the 2-norm of a matrix or vector by $\|\cdot\|$. For a random variable, we denote its pdf as $\phi$, its cdf as $\Phi$, and its quantile function as $\Phi^{-1}$. 

\subsection{Problem Formulation}
We are motivated by problems in multi-vehicle stochastic optimal control. We consider a discrete-time LTI system given by 
\begin{equation}
    \boldsymbol x(k+1) = A \boldsymbol x(k) + B \overline{u}(k) + \boldsymbol{w}(k) \label{eq:system}
\end{equation}
with state $\boldsymbol{x}(k) \in \R^n$, input $\overline{u}(k) \in \mathcal{U} \subset \R^m$, disturbance $\boldsymbol{w}(k) \in \R^n$ that is a 
random variable, and initial condition $\overline{x}(0)$.  We presume 
$\mathcal{U}$ is a convex polytope and that the system evolves over a finite time horizon of $N \in \mathbb{N}$ steps. 

We rewrite the dynamics at time $k$ as
\begin{equation} \label{eq:lin_dynamics}
    \boldsymbol x(k) = A^k \overline{x}(0) + \mathcal{C}_u(k) \overline{U} + \mathcal{C}_w(k) \boldsymbol{W}
\end{equation}
with 
\begin{subequations}
\begin{alignat}{2}
    \overline{U} =& \left[ u(0)^\top \:\:\ldots \:\:u(N-1)^\top\right]^\top &&\in \mathcal{U}^{N} \\
    \boldsymbol{W} =& \left[ \boldsymbol{w}(0)^\top \:\:\ldots \:\:\boldsymbol{w}(N-1)^\top\right]^\top &&\in \mathbb{R}^{Nn} \\
    \mathcal{C}_u(k) = & \left[ A^{k-1}B \:\: \ldots \:\: AB \:\: B \:\: 0_{n \times (N-k)m} \right] && \in \R^{n \times Nm} \\
    \mathcal{C}_w(k) = & \left[ A^{k-1} \:\:\ldots \:\:A \:\:I_n \:\:0_{n \times (N-k)n} \right] && \in \R^{n \times Nn}
\end{alignat}
\end{subequations}    

We consider a planning context, in which (\ref{eq:system}) captures the evolution of $v$ vehicles in a bounded region, with state $\boldsymbol x_i$ and concatenated input $\overline{U}_i$ for vehicle $i$. 
We presume 
desired target sets that vehicles must reach, known and static obstacles that vehicles must avoid, as well as the need for collision avoidance between vehicles, all with desired likelihoods.  
\begin{subequations}\label{eq:constraints}
    \begin{align}
    \Prob \left\{\boldsymbol x_i(k)  \in  \mathcal T_{i,k} \right\} & \geq  1\!-\!\alpha_{\mathcal T} \label{eq:constraint_t}\\
    \Prob \left\{\boldsymbol \| S \boldsymbol x_i(k) \!-\! S \boldsymbol o \| \geq r \right\} &\geq 1\!-\!\alpha_o, \: \forall i \in \mathbb N_{[1, v]} \label{eq:constraint_o}\\
    \Prob \left\{\boldsymbol \| S \boldsymbol x_i(k) \!- \!S \boldsymbol x_j (k) \| \geq r \right\} &\geq 1\!-\!\alpha_r, \: \forall i \neq j \in \mathbb N_{[1, v]} \label{eq:constraint_r}
    \end{align}
\end{subequations}
We presume convex, compact, and polytopic sets $ \mathcal T_k \subseteq \mathbb R^n$, known matrix $S \in \mathbb R^{q \times m}$, positive scalar $r \in \mathbb R_+$, static object locations $\boldsymbol o \in \R^n$, 
and probabilistic violation thresholds $\alpha_{\mathcal T}, \alpha_{o}, \alpha_r, \in (0,1)$.

We seek to minimize a convex performance objective $J: \mathcal{X}^{N \times v} \times \mathcal{U}^{N \times v} \rightarrow \R$. 
\begin{subequations}\label{prob:big_prob_eq}
    \begin{align}
        \underset{\overline{U}_1, \dots, \overline{U}_v}{\mathrm{minimize}} \quad & J\left(
        \boldsymbol{X}_1, \ldots, \boldsymbol X_v,  \overline{U}_1, \dots, \overline{U}_v\right)  \\
        \mathrm{subject\ to} \quad  & \overline{U}_1, \dots, \overline{U}_v \in  \mathcal U^N,  \\
        & \text{Dynamics } \eqref{eq:lin_dynamics} \text{ with }
        \overline{x}_1(0), \dots, \overline{x}_v(0)\\
        & \text{Probabilistic constraints  \eqref{eq:constraints}}
    \end{align}
\end{subequations}
where $\boldsymbol X_i = \begin{bmatrix} \boldsymbol x_i^{\top}(1) & \ldots & \boldsymbol x_i^{\top}(k) \end{bmatrix}^{\top}$ is the concatenated state vector for vehicle $i$.

We first note that each constraint in \eqref{eq:constraints} can be rewritten in one of the two following forms, which are convex in the control input and affine in a random variable (as shown in Appendix \ref{sec:reformulation}).

\noindent\begin{subequations}
\begin{align}
    \Prob \left\{ \bigcap_{j=1}^v \bigcap_{i=1}^{n_j} f_i(\overline{x}_j(0), \overline{U}_j) + g_i \eta_i \leq c_i \right\} & \geq 1-\alpha \label{eq:prob_constraints_convex} \\
    \Prob \left\{ \bigcap_{j=1}^v \bigcap_{i=1}^{n_j} f_i(\overline{x}_j(0), \overline{U}_j) - g_i \eta_i \geq c_i \right\} & \geq 1-\alpha \label{eq:prob_constraints_reverse_convex}
\end{align}
\end{subequations}
The function $f_i(\cdot): \mathcal{X} \times \mathcal{U}^N \rightarrow \R $ is convex, $g_i \in \R_+$ is a scalar, and $\eta_i$ is a real and continuous random variable that is a function of the disturbance.  
We presume $n_j$ is the number of scalar constraints imposed on vehicle $j$, $c_i$ is a constant, and $\alpha$ is a probabilistic violation threshold.

\begin{assm}\label{assm:1}
The pdf of the random variable $\eta_i$ can be differentiated at least $n_d$ times, and the quantile of $\eta_i$ must be convex and non-negative in the region $\mathcal A = [1-\alpha, 1]$.
\end{assm}

This assumption is not overly restrictive, as it can 
be met by most distributions. 
Differentiability is needed for the Taylor series approximation of the quantile; fewer derivatives means a coarser approximation.  
Convexity over this range is met when 
1) the first derivative of the pdf is strictly negative on $\mathcal{B} \equiv \left\{x \mid \Phi_{\eta_i}(x) \in \mathcal A \right\}$, and 2) the pdf converges to zero as $x$ increases on $\mathcal{B}$.
The intuition behind these criteria is that when both conditions are met, the cdf will be strictly concave on $\mathcal{B}$, and hence, the quantile will be strictly convex.  
However, we note that not all distributions 
will have a convex quantile in $\mathcal{A}$: 
Consider the Beta distribution with shape parameters $(\alpha, \beta)$ both less than one, which results in a bi-modal distribution with modes at both ends of the support. 

The reformulation \eqref{eq:prob_constraints_convex}-\eqref{eq:prob_constraints_reverse_convex} requires the quantile evaluations be non-negative to tighten the probabilistic constraints. For constraints like \eqref{eq:constraint_t}, many models assume a symmetric distribution and $\Ex[\eta_i]=0$. This implies the quantile is non-negative in the convex region. Similarly, the use of distance metrics in \eqref{eq:constraint_o}-\eqref{eq:constraint_r} imply $\eta_i$ will be strictly positive. In the event that this is not the case, a slight modification to \eqref{eq:prob_constraints_convex}-\eqref{eq:prob_constraints_reverse_convex} and Assumption \ref{assm:1} can be made to maintain tightening of the constraint.

\begin{assm}\label{assm:2}
The random variable $\eta_i$ has a known quantile, $\Phi^{-1}_{\eta_i}(p)$, for some $p \in (0,1)$.
\end{assm}

This assumption is easily met by symmetric distributions;
many have either a location parameter that represents the median or an easy way to find the median. For distributions on a semi-infinite support or that are skewed, satisfying this assumption may be more difficult.  Approximations via brute force or other methods may be distribution dependent.
In practice, it may be sufficient to choose $\Phi^{-1}_{\eta_i}(1-\varepsilon)$ to be some value approaching the upper end of the support, setting $\varepsilon$ to be arbitrarily small. 

\begin{prob} \label{prob:initial}
    Solve the optimization problem
    \begin{subequations}\label{prob:initial_eq}
        \begin{align}
        \underset{\overline{U}_1, \dots, \overline{U}_v}{\mathrm{minimize}} \quad & J\left(
        \boldsymbol{X}_1, \ldots, \boldsymbol X_v,  \overline{U}_1, \dots, \overline{U}_v\right)  \\
            \mathrm{subject\ to} \quad  & \overline{U}_1, \dots, \overline{U}_v \in  \mathcal U^N,  \\
            & \text{\normalfont Dynamics \eqref{eq:lin_dynamics} with } \overline{x}_1(0), \dots, \overline{x}_v(0)  \\
            & \text{\normalfont Probabilistic constraints \eqref{eq:prob_constraints_convex}-\eqref{eq:prob_constraints_reverse_convex}} \label{prob:initial_eq_prob_constraints}
        \end{align}
    \end{subequations}
    with open loop control $\overline{U}_1, \dots, \overline{U}_v \in  \mathcal U^N$, for probabilistic violation thresholds small enough to maintain convexity, under Assumptions 1 and 2. 
\end{prob} 



The main challenge in solving Problem \ref{prob:initial} is assuring \eqref{prob:initial_eq_prob_constraints}.

\section{Methods}\label{sec:methods}

To solve Problem \ref{prob:initial}, we employ a standard risk allocation framework in conjunction with a quantile reformulation.  We then 
approximate the quantile function over its convex region, 
via piecewise affine constraints.
Lastly, we employ difference-of-convex programming to iteratively solve reverse convex constraints to a local optimum. These reformulations enable solution via a series of quadratic programs. 

\subsection{Quantile Reformulation}
First, consider the reformulation of \eqref{eq:prob_constraints_convex}.
We take the complement of \eqref{eq:prob_constraints_convex} such that the probability function consists of a union of events,
\begin{equation}\label{eq:quantile_reform_comp}
    \Prob \left\{ \bigcup_{j=1}^{v} \bigcup_{i=1}^{n_j} \left[f_i(\overline{x}_j(0), \overline{U}_j) + g_i \eta_i \leq c_i \right]^c \right\} \leq \alpha 
\end{equation}
To take the complement, we reverse the sign of the inequality. Next, we implement Boole's inequality to create an upper bound for the original probability,
\begin{align}\label{eq:quantile_reform_boole}
     &\Prob \left\{\bigcup_{j=1}^{v} \bigcup_{i=1}^{n_j} f_i(\overline{x}_j(0), \overline{U}_j) + g_i \eta_i \geq c_i \right\} \\
     & \qquad \leq \sum_{j=1}^{v} \sum_{i=1}^{n_j} \Prob \left\{ f_i(\overline{x}_j(0), \overline{U}_j) + g_i \eta_i \geq c_i  \right\} \label{eq:boole_right}
\end{align}
Using the approach in \cite{ono2008iterative}, we introduce variables 
$\underline{\omega}_{ij}$ 
to allocate risk to each of the individual probabilities
\begin{subequations}\label{eq:quantile_reform_new_var}
\begin{align}
     \Prob \left\{ f_i(\overline{x}_j(0), \overline{U}_j) + g_i \eta_i \leq c_i  \right\} &\geq 1-\underline{\omega}_{ij} \label{eq:quantile_orig} \\
     \sum_{j=1}^{v} \sum_{i=1}^{n_j} \underline{\omega}_{ij} &\leq \alpha \label{eq:quantile_reform_new_var_2}\\
     \underline{\omega}_{ij} & \geq 0 \label{eq:quantile_reform_new_var_3}
\end{align}
\end{subequations}
By inverting the argument of (\ref{eq:quantile_orig}), we obtain
\begin{equation}
\begin{array}{rrl}
    &\Prob \left\{ \eta_i \leq \frac{1}{g_i}\left(c_i - f_i(\overline{x}_j(0), \overline{U}_j) \right) \right\} & \geq 1-\underline{\omega}_{ij} \\
    \Leftrightarrow &\frac{1}{g_i}\left(c_i - f_i(\overline{x}_j(0), \overline{U}_j) \right) & \geq \Phi^{-1}_{\eta_i}\left(1-\underline{\omega}_{ij} \right)
\end{array}
\label{eq:quantile_opt}
\end{equation}
Rearranging (\ref{eq:quantile_opt}), we obtain
\begin{equation}
     f_i(\overline{x}_j(0), \overline{U}_j)  \leq c_i - g_i\left(\Phi^{-1}_{\eta_i}\left(1-\underline{\omega}_{ij} \right)\right)
\end{equation}

The reformulation of \eqref{eq:prob_constraints_reverse_convex} proceeds similarly, and results in reverse convex constraints.  

\begin{mydefn}[Reverse convex constraint]
A reverse convex constraint is the complement of a convex constraint, that is, $f(x) \geq c$ for a convex function $f: \R \rightarrow \R$ and a scalar $c \in \mathbb R$.
\end{mydefn}
By combining the reformulations \eqref{eq:prob_constraints_convex}-\eqref{eq:prob_constraints_reverse_convex}, we obtain
\begin{subequations}\label{eq:quantile_final}  
\begin{align} 
     f_i(\overline{x}_j(0), \overline{U}_j)  &\leq c_i - g_i \left(\Phi^{-1}_{\eta_i}\left(1-\underline{\omega}_{ij} \right)\right) \label{eq:quantile_reform_final_1}\\
     \textstyle\sum_{j=1}^{v} \sum_{i=1}^{n_j} \underline{\omega}_{ij} &\leq \alpha  \label{eq:quantile_reform_final_2}\\
     \underline{\omega}_{ij} & \geq 0 \label{eq:quantile_reform_final_3}\\ 
     f_i(\overline{x}_j(0), \overline{U}_j)  &\geq c_i +g_i \left(\Phi^{-1}_{\eta_i}\left(1+\underline{\beta}_{ij} \right)\right) \label{eq:quantile_reform_final_4}\\
     \textstyle\sum_{j=1}^{v} \sum_{i=1}^{n_j} \underline{\beta}_{ij} &\leq \alpha \label{eq:quantile_reform_final_5}\\
     \underline{\beta}_{ij} & \geq 0\label{eq:quantile_reform_final_6}
\end{align}
\end{subequations}

\begin{lem} \label{lem1}
For the controller $\overline{U}_1, \dots, \overline{U}_v$, if there exists risk allocation variables $\underline{\omega}_{ij}$ satisfying \eqref{eq:quantile_reform_final_2}-\eqref{eq:quantile_reform_final_3} for constraints in the form of \eqref{eq:quantile_reform_final_1} and risk allocation variables $\underline{\beta}_{ij}$ satisfying \eqref{eq:quantile_reform_final_5}-\eqref{eq:quantile_reform_final_6} for constraints in the form of \eqref{eq:quantile_reform_final_4}, then $\overline{U}_1, \dots, \overline{U}_v$ satisfy \eqref{prob:initial_eq_prob_constraints}.
\end{lem}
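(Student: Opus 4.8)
The plan is to show that the deterministic constraint system \eqref{eq:quantile_final} implies the probabilistic constraints \eqref{prob:initial_eq_prob_constraints} by retracing the reformulation chain \eqref{eq:quantile_reform_comp}--\eqref{eq:quantile_final} and checking that every step is an implication in the required (conservative) direction. Since the two groups \eqref{eq:quantile_reform_final_1}--\eqref{eq:quantile_reform_final_3} and \eqref{eq:quantile_reform_final_4}--\eqref{eq:quantile_reform_final_6} decouple, I would argue the convex case in detail and obtain the reverse convex case by the symmetric computation.

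Fix a controller $\overline{U}_1,\dots,\overline{U}_v$ and a risk allocation $\underline{\omega}_{ij}$ satisfying \eqref{eq:quantile_reform_final_2}--\eqref{eq:quantile_reform_final_3}. For each pair $(i,j)$ I would rearrange \eqref{eq:quantile_reform_final_1} and divide by $g_i > 0$ to get $\Phi^{-1}_{\eta_i}(1-\underline{\omega}_{ij}) \leq \tfrac{1}{g_i}\bigl(c_i - f_i(\overline{x}_j(0),\overline{U}_j)\bigr)$. Applying the cdf $\Phi_{\eta_i}$, which is nondecreasing, and invoking the standard Galois identity for the (generalized) quantile, $\Phi^{-1}_{\eta_i}(p) \leq t \Leftrightarrow p \leq \Phi_{\eta_i}(t)$, yields $1-\underline{\omega}_{ij} \leq \Phi_{\eta_i}\bigl(\tfrac{1}{g_i}(c_i - f_i)\bigr) = \Prob\{\eta_i \leq \tfrac{1}{g_i}(c_i - f_i)\} = \Prob\{f_i + g_i\eta_i \leq c_i\}$, which is exactly the individual chance constraint \eqref{eq:quantile_orig}; equivalently, the complementary event satisfies $\Prob\{f_i + g_i\eta_i > c_i\} \leq \underline{\omega}_{ij}$, and continuity of $\eta_i$ lets me write this complement with a non-strict inequality as in \eqref{eq:quantile_reform_boole}.

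Next I would apply Boole's inequality over the $\sum_j n_j$ complementary events and use \eqref{eq:quantile_reform_final_2}:
\[
\Prob\!\left\{\bigcup_{j=1}^{v}\bigcup_{i=1}^{n_j}\bigl[f_i(\overline{x}_j(0),\overline{U}_j) + g_i\eta_i \leq c_i\bigr]^c\right\} \leq \sum_{j=1}^{v}\sum_{i=1}^{n_j}\underline{\omega}_{ij} \leq \alpha ,
\]
and taking complements of both sides recovers \eqref{eq:prob_constraints_convex}. The same computation applied to \eqref{eq:quantile_reform_final_4}--\eqref{eq:quantile_reform_final_6} (with the quantile argument read as $1-\underline{\beta}_{ij}$) gives $\Phi^{-1}_{\eta_i}(1-\underline{\beta}_{ij}) \leq \tfrac{1}{g_i}(f_i - c_i)$, hence $\Prob\{f_i - g_i\eta_i \geq c_i\} \geq 1-\underline{\beta}_{ij}$, and Boole's inequality with \eqref{eq:quantile_reform_final_5} yields \eqref{eq:prob_constraints_reverse_convex}. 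Since \eqref{eq:prob_constraints_convex}--\eqref{eq:prob_constraints_reverse_convex} together are precisely \eqref{prob:initial_eq_prob_constraints}, the controller is feasible for Problem \ref{prob:initial}, which proves the lemma.

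I expect the only delicate point to be the quantile-inversion step: one must be sure that rearranging the deterministic inequality and applying $\Phi_{\eta_i}$ produces the probabilistic inequality in the conservative direction. This relies on (i) $g_i > 0$, so the division preserves the sense of the inequality, (ii) monotonicity of $\Phi_{\eta_i}$ together with the Galois property of the generalized inverse, and (iii) continuity of $\eta_i$ to pass between strict and non-strict inequalities when moving to complements. The remaining steps — Boole's inequality and complementation — are mechanical, so I would keep the proof brief and emphasize that only sufficiency is claimed: the union bound and the freedom in the choice of $\underline{\omega}_{ij},\underline{\beta}_{ij}$ make the reformulation conservative rather than exact.
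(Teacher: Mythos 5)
Your proposal is correct and follows essentially the same route as the paper's proof: the individual chance constraints recovered by inverting the quantile (as in \eqref{eq:quantile_opt}), then Boole's inequality with the budget \eqref{eq:quantile_reform_final_2}/\eqref{eq:quantile_reform_final_5}, then complementation (De Morgan) to recover \eqref{prob:initial_eq_prob_constraints}. You simply spell out the quantile-inversion and monotonicity details (and silently fix the apparent sign typo in \eqref{eq:quantile_reform_final_4}) that the paper leaves implicit in its two-sentence argument.
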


\begin{proof}
Satisfaction of \eqref{eq:quantile_reform_final_2}-\eqref{eq:quantile_reform_final_3} and \eqref{eq:quantile_reform_final_5}-\eqref{eq:quantile_reform_final_6} implies \eqref{eq:boole_right} meets the probabilistic violation threshold of $1-\alpha$. Boole's inequality and De Morgan's laws guarantee \eqref{prob:initial_eq_prob_constraints} is satisfied.
\end{proof}

The constraint \eqref{eq:quantile_reform_final_1} is convex in $\overline U$, however 
\eqref{eq:quantile_reform_final_4} is reverse convex.  
Additionally, while Assumption \ref{assm:1} guarantees the convexity of \eqref{eq:quantile_reform_final_1}, the expressions $\Phi^{-1}_{\eta_i}(1-\underline{\omega}_{ij})$ 
and $\Phi^{-1}_{\eta_i}(1-\underline{\beta}_{ij})$
are non-conic and cannot be readily handled by off-the-shelf solvers. 


\subsection{Quantile Approximation} \label{sub:quantiles}
The quantile for many
continuous random variables
does not have a closed form, and
brute force numerical approximations may be costly to compute. 
Approximation methods are typically tailored to specific distributions \cite{abramowitz1964handbook}, \cite{kafadar1988bidec}, \cite{wichura1988algorithm},
although some recent approaches have focused on
generic methods to approximate quantile functions of arbitrary distributions.  

We use an approach that relies on a Taylor series expansion of the quantile \cite{Yu2017}. 
For a random variable $X$, and an initial evaluation point $\Phi^{-1}_{X}(p_0)$ for $p_0 \in (0,1)$, \cite{Yu2017} proposes an iterative process that
evaluates a finite Taylor series expansion at points that are an interval $h \in \mathbb R$ apart.  With $n_d+1$ Taylor series terms, a quantile approximation at $p_{c+1} = p_c + h$ is described by
\begin{align}\label{eq:taylor_quantile}
    \hat{\Phi}^{-1}_{X}(p_{c+1}) = &\; \Phi^{-1}_{X}(p_c) \\
    &+\sum_{d=1}^{n_d+1} (-1)^{d}  \left. \frac{\partial^d \Phi^{-1}_{X}(p)}{(\partial \gamma)^d} \right|_{p = p_{c}} \cdot \frac{\log(p_{c+1}/p_c)^d}{d!} \nonumber
\end{align}  
where $\gamma =- \log(p)$ is a variable substitution used for numerical tractability.  Typically, $n_d=3$ or 4 derivatives are sufficient, and steps $c$ are computed until a predetermined terminating percentile.  

Derivatives of the quantile are obtained via the inverse function theorem,
\begin{equation}
    \frac{\partial}{\partial \gamma} \Phi^{-1}_{X}(p) = -\frac{e^{-\gamma}}{\phi_X (p)}
\end{equation}
where the $i$\textsuperscript{th} derivative of the quantile will elicit the the $i-1$\textsuperscript{th} derivative of $\phi_X (\cdot)$. Analytical expressions for the first four derivatives are provided in \cite{Yu2017}.

The error in the approximation 
\begin{equation}\label{eq:taylor_error_eq}
    \epsilon = \Phi^{-1}_{X}(\cdot) - \hat{\Phi}^{-1}_{X}(\cdot) 
\end{equation}  
is characterized by the unused Taylor series terms, such that
\begin{equation} \label{eq:taylor_error}
    \epsilon \in O\left([h / \min (p_{l-1}, p_0)]^{n_d}\right)
\end{equation} 
so that $\epsilon$ converges to $0$ as $h \rightarrow 0$ and $n_d \rightarrow \infty$ \cite{Yu2017}.

We presume a piecewise affine approximation to connect evaluation points.  However, to ensure a reasonable number of variables and constraints in the optimization, we selectively choose evaluation points, rather than connecting all points.  Given an error threshold, $\xi$, we seek a subset of $l^\ast$ affine terms, such that 
\begin{equation} \label{eq:quantile_over_error}
    \hat{\Phi}^{-1}_{X}(p_c) \leq \max_{q \in \mathbb{N}_{[1,l^\ast]}} (\underline{m}_{ijq} \, \underline{\omega}_{ij} + \underline{c}_{ijq}) \leq \hat{\Phi}^{-1}_{X}(p_c) + \xi 
\end{equation}
for slopes and intercepts $\underline{m}_{ijq}$, $\underline{c}_{ijq}$, respectively, for $\forall q \in \mathbb{N}_{[1,l^\ast]} $, as shown in Figure \ref{fig:approx_graph}. Here, $i$ and $j$ refer to the vehicle and constraint indices, respectively. We propose Algorithm \ref{algo:PWA} to compute the reduced set $\{\underline{m}_{ijq}, \underline{c}_{ijq} \mid \forall q \in \mathbb{N}_{[1,l^\ast]} \}$. Note that although the error threshold, $\xi$, is formulated with respect to the approximation (not the true quantile),  Assumption \ref{assm:1} guarantees that \eqref{eq:quantile_over_error} becomes an affine overapproximation of the true quantile as $\epsilon \rightarrow 0$. 
\begin{figure}
\centering
\includegraphics{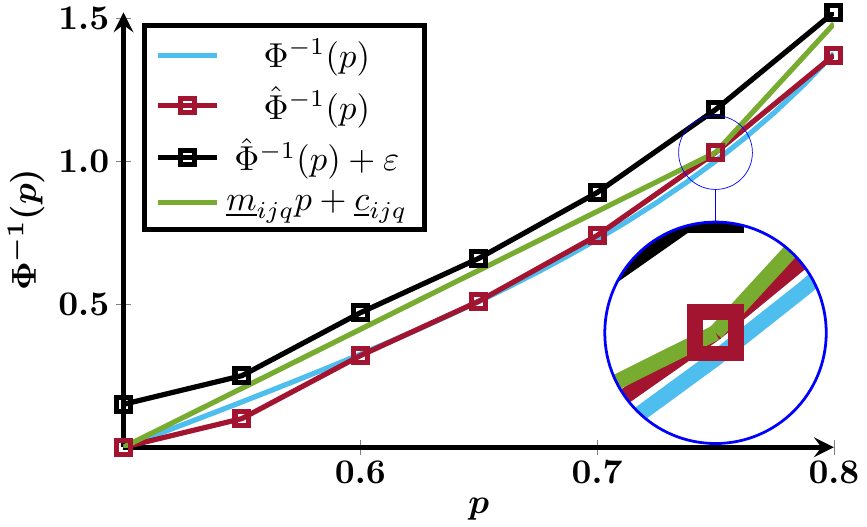}
\caption{Quantile approximation method applied to a Cauchy distribution.  The blue line represents the true quantile, the red points result from a Taylor series approximation (\ref{eq:taylor_quantile}), the black points show the error threshold $\xi$, and the green lines represent the affine approximation (\ref{eq:quantile_over_error}). 
}
\label{fig:approx_graph}
\end{figure}

\begin{algorithm}
  \caption{Computing $\{\underline{m}_{ijq}, \underline{c}_{ijq}\}$ from $\phi_{\eta_i}$}
	\label{algo:PWA}
	\textbf{Input}: The PDF of $\eta_i$, $\phi_{\eta_i}$, and its derivatives $\phi^{'}_{\eta_i}, \ldots, \phi^{(n)}_{\eta_i}$, instantiating point $p_0$, termination point $p_l$, known quantile $\Phi^{-1}_{\eta_i}(p_0)$, step size $h$, and maximum error threshold $\xi$.
	\\
	\textbf{Output}: Affine terms of $\hat\Phi^{-1}_{\eta_i}$, $\{\underline{m}_{ijq}, \underline{c}_{ijq}\}$
	\begin{algorithmic}[1]
	\For{$p_i = p_0+h$ \textbf{to} $p_l$ \textbf{by} $h$}
	\State $\mathcal P_i \gets \hat{\Phi}^{-1}(p_i)$ \Comment{Via \eqref{eq:taylor_quantile}}
	\EndFor
	\State	$i \gets 0$
	\While{$i<l$}
	\For{$j = l$ \textbf{to} $i+1$ \textbf{by} $-1$}
	\State $\underline{m} \gets \frac{\mathcal P_j-\mathcal P_i}{h(j-i)}$
	\State $\underline{c} \gets \mathcal P_i - p_i \times \underline{m}$
	\For{$y=i+1$ \textbf{to} $l-1$ \textbf{by} $1$}
	\State $\epsilon_y = \mathcal P_y - (p_y \times \underline{m} + \underline{c})$
	\State \textbf{if} $\epsilon_y > \xi$ \textbf{then next} $j$
	\EndFor
	\State $\{\underline{m}_{ijq}, \underline{c}_{ijq}\} \gets \underline{m}, \underline{c}$
	\State \textbf{Break}
	\EndFor	
	\State $i \gets j$
	\EndWhile
  \end{algorithmic}
\end{algorithm}

We reformulate \eqref{eq:quantile_reform_final_1} with the piecewise affine approximation \eqref{eq:quantile_over_error}, as
\begin{subequations}\label{eq:Num_quantile_reform}
\begin{align}
     f_i(\overline{x}_j(0), \overline{U}_j)  &\leq c_i - \frac{1}{g_i}\left(\underline{s}_{ij}\right) & \label{eq:Num_quantile_reform_1}\\
     \underline{s}_{ij} & \geq \underline{m}_{ijq} \, \underline{\omega}_{ij} + \underline{c}_{ijq} & \forall q \in \mathbb{N}_{[1,l^\ast]}\\
     \textstyle\sum_{j=1}^{v}\sum_{i=1}^{n_j} \underline{\omega}_{ij} &\leq \alpha & \\
     \underline{\omega}_{ij} & \geq 0 &
\end{align}
\end{subequations}
with slack variables $\underline{s}_{ij}$.  A similar reformulation can be posed for \eqref{eq:quantile_reform_final_4}.
In the limit, as \eqref{eq:quantile_over_error} becomes an affine overapproximation of $\Phi^{-1}(\cdot)$, \eqref{eq:Num_quantile_reform} is a tightening of \eqref{eq:quantile_final} and Assumption \ref{assm:1} ensures the convexity of \eqref{eq:Num_quantile_reform}. 

\begin{lem} \label{lem2}
For a controller $\overline{U}_1, \dots, \overline{U}_v$, if there exists risk allocation variables $\underline{\omega}_{ij}$ and $\underline{\beta}_{ij}$, 
and slack variables $\underline{s}_{ij}$ satisfying \eqref{eq:Num_quantile_reform}, then $\overline{U}_1, \dots, \overline{U}_v$ asymptotically satisfies \eqref{prob:initial_eq_prob_constraints} as $h \rightarrow 0$ and $n \rightarrow \infty$.
\end{lem}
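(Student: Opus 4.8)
The plan is to pull the numerical reformulation \eqref{eq:Num_quantile_reform} back to \eqref{eq:quantile_final} in the limit $h\to 0$ and $n_d\to\infty$ (where $n_d$, the number of retained Taylor terms, is the ``$n$'' of the statement), and then invoke Lemma~\ref{lem1}. First I would fix a tuple $(\overline{U}_1,\dots,\overline{U}_v,\underline{\omega}_{ij},\underline{\beta}_{ij},\underline{s}_{ij})$ feasible for \eqref{eq:Num_quantile_reform}. Since $\underline{\omega}_{ij}\in[0,\alpha]$, the point $p_c=1-\underline{\omega}_{ij}$ lies in $\mathcal A=[1-\alpha,1]$, and by construction Algorithm~\ref{algo:PWA} returns affine pieces whose pointwise maximum lies above every Taylor-evaluated point $\hat\Phi^{-1}_{\eta_i}(p_c)$ and within $\xi$ of it; combined with the slack constraint of \eqref{eq:Num_quantile_reform} this yields $\underline{s}_{ij}\ \geq\ \max_q(\underline{m}_{ijq}\underline{\omega}_{ij}+\underline{c}_{ijq})\ \geq\ \hat\Phi^{-1}_{\eta_i}(1-\underline{\omega}_{ij})$, i.e.\ $\underline{s}_{ij}$ overestimates the (suitably scaled) quantile term appearing in \eqref{eq:quantile_reform_final_1}.

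Next I would invoke the error characterization \eqref{eq:taylor_error}: $\epsilon=\Phi^{-1}_{\eta_i}-\hat\Phi^{-1}_{\eta_i}\in O\big([h/\min(p_{l-1},p_0)]^{n_d}\big)\to 0$ as $h\to0$ and $n_d\to\infty$, uniformly over the compact set $\mathcal A$. Hence $\hat\Phi^{-1}_{\eta_i}\to\Phi^{-1}_{\eta_i}$ uniformly on $\mathcal A$, and because Assumption~\ref{assm:1} makes $\Phi^{-1}_{\eta_i}$ convex and nonnegative there, the piecewise-affine secant construction of \eqref{eq:quantile_over_error} becomes an overapproximation of the true quantile in the limit; therefore $\underline{s}_{ij}\geq\Phi^{-1}_{\eta_i}(1-\underline{\omega}_{ij})$ asymptotically. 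Substituting this bound into \eqref{eq:Num_quantile_reform_1} recovers \eqref{eq:quantile_reform_final_1}, and the identical argument applied to the reverse-convex block (with $\underline{\beta}_{ij}$ in place of $\underline{\omega}_{ij}$, using Assumption~\ref{assm:2} to anchor the Taylor recursion) recovers \eqref{eq:quantile_reform_final_4}. Since the risk-budget and nonnegativity constraints \eqref{eq:quantile_reform_final_2}-\eqref{eq:quantile_reform_final_3} and \eqref{eq:quantile_reform_final_5}-\eqref{eq:quantile_reform_final_6} appear verbatim inside \eqref{eq:Num_quantile_reform}, all hypotheses of Lemma~\ref{lem1} hold in the limit, so \eqref{prob:initial_eq_prob_constraints} is satisfied asymptotically.

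The main obstacle is making the second step rigorous. The claim that ``the piecewise affine maximum becomes an overapproximation of the true quantile'' needs genuine uniform convergence of $\hat\Phi^{-1}_{\eta_i}$ to $\Phi^{-1}_{\eta_i}$ on $\mathcal A$; this requires the denominator $\min(p_{l-1},p_0)$ in \eqref{eq:taylor_error} to remain bounded away from $0$ as the grid is refined, and care near $p=1$, where the quantile of a heavy-tailed $\eta_i$ (e.g.\ Cauchy) is large — so one should either keep the terminating percentile strictly below $1$ or argue directly on $\mathcal A$. A second subtlety is that the feasible $\underline{\omega}_{ij},\underline{\beta}_{ij}$ may themselves depend on $h$ and $n_d$; I would handle this by noting the overapproximation inequality holds uniformly in the risk allocation over $\mathcal A$ (so no subsequence extraction is needed), or, failing that, by a compactness argument on the bounded feasible set $[0,\alpha]$ for the risk variables. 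Finally, I would record explicitly that the fixed threshold $\xi$ only ever inflates the surrogate above $\hat\Phi^{-1}_{\eta_i}$, never below, so it preserves the tightening — hence the validity — of the original chance constraint.
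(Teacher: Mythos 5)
Your proposal is correct and follows essentially the same route as the paper's own (much terser) proof: invoke the error bound \eqref{eq:taylor_error} to send $\epsilon \to 0$, observe that \eqref{eq:quantile_over_error} then makes \eqref{eq:Num_quantile_reform} a conservative enforcement of \eqref{eq:quantile_final}, and conclude via Lemma~\ref{lem1}. The additional care you take (uniformity of convergence on $\mathcal A$, possible dependence of the risk variables on $h$ and $n_d$, and the one-sided role of $\xi$) goes beyond what the paper writes down but does not change the argument's structure.
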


\begin{figure*}[ht]
\emph{Proposed reformulation to solve Problem~\ref{prob:initial}:} 
\begin{subequations}\label{prob:second}
    \begin{align}
        \underset{\substack{\overline{U}_1, \dots, \overline{U}_v \\ \overline{\omega}_{11}, \dots, \overline{\omega}_{ij} \\ \overline{s}_{11}, \dots, \overline{s}_{ij}}}{\mathrm{minimize}} \quad & J\left(
       \boldsymbol{X}_1, \ldots, \boldsymbol X_v,  \overline{U}_1, \dots, \overline{U}_v\right) \\
        \mathrm{subject\ to} \quad  & \overline{U}_1, \dots, \overline{U}_v \in  \mathcal U^N,  \\
        & \text{Dynamics \eqref{eq:lin_dynamics} with initial states }\overline{x}_1(0), \dots, \overline{x}_v(0) 
    \end{align}
    \vspace{-20pt}
     \begin{alignat}{6}
        \forall j \in \mathbb N_{[1,v]}, i \in \mathbb N_{[1,n_j]} \quad & f_i(\overline{x}_j(0), \overline{U}_j) \leq c_i - \frac{1}{g_i}\left(\underline{s}_{ij}\right) & &\text{\quad and/or \quad} && f_i(\overline{x}_j(0), \overline{U}_j) \geq c_i + \frac{1}{g_i}\left(\underline{s}_{ij}\right)  \label{eq:prob2_reform}\\
        \hspace{15pt}\forall j \in \mathbb N_{[1,v]}, i \in \mathbb N_{[1,n_j]}, q \in \mathbb{N}_{[1,l^\ast]} \quad & \underline{s}_{ij} \geq \underline{m}_{ijq} \, \underline{\omega}_{ij} + \underline{c}_{ijq} &&\text{\quad and/or \quad} && \underline{s}_{ij} \geq \underline{m}_{ijq} \, \underline{\beta}_{ij} + \underline{c}_{ijq} \\
        & \textstyle\sum_{j=1}^{v}\sum_{i=1}^{n_j} \underline{\omega}_{ij} \leq \alpha &&\text{\quad and/or \quad} && \textstyle\sum_{j=1}^{v}\sum_{i=1}^{n_j} \underline{\beta}_{ij} \leq \alpha  \\
        \forall j \in \mathbb N_{[1,v]}, i \in \mathbb N_{[1,n_j]} \quad & \underline{\omega}_{ij} \geq 0 &&\text{\quad and/or \quad} && \underline{\beta}_{ij} \geq 0
    \end{alignat}
\end{subequations}
\rule{\textwidth}{0.5pt}
\end{figure*}

\begin{proof} 
By \eqref{eq:taylor_error}, the approximation error $\epsilon \rightarrow 0$ as $h \rightarrow 0$ and $n \rightarrow \infty$. In this case, \eqref{eq:Num_quantile_reform} conservatively enforces \eqref{eq:quantile_final} by \eqref{eq:quantile_over_error}. By Lemma \ref{lem1}, \eqref{prob:initial_eq_prob_constraints} is conservatively enforced. 
\end{proof}

We note that a limitation of our approach is that we can only guarantee constraint satisfaction in the limit.  
In practice, a sufficiently differentiable distribution will likely behave well enough that four or more derivatives will result in an approximation with small errors given a small enough step size. Many common distributions will fall into this category, especially those of the exponential family of distributions. Where this methodology will likely fail is multi-modal distributions or distributions that have a non-smooth terminating derivative.  We have found empirically that a step size, $h$, on the order of $10^{-6}$, is sufficiently small that the approximation error, \eqref{eq:taylor_error_eq}, is also on the order of $10^{-6}$. 

\subsection{Reverse Convex Constraints}\label{sub:dcp}
A standard approach to handling reverse convex constraints is difference of convex programming, 
\begin{equation} \label{eq:dc}
\begin{split}
    \underset{x}{\mathrm{minimize}} \quad & \mathcal{F}_0(x)-\mathcal{G}_0(x)  \\
    \mathrm{subject\ to} \quad  & \mathcal{F}_i(x)-\mathcal{G}_i(x) \leq 0 \quad \text{for } i \in \mathbb{N}_{[1,L]}  \\
\end{split}   
\end{equation}
in which the cost and constraints are represented as the difference of two convex functions, i.e., $\mathcal{F}_0, \mathcal{F}_i(\cdot): \R^n \rightarrow \R$ and $\mathcal{G}_0, \mathcal{G}_i(\cdot): \R^n \rightarrow \R$ for $x \in \R^n$ are convex.
The convex-concave procedure solves (\ref{eq:dc}) to a local minimum \cite{boyd_dc_2016} through an iterative approach, which employs first order approximations of $\mathcal{G}_0, \mathcal{G}_i$ at each iteration.  
Feasibility of \eqref{eq:dc} is dependent on the feasibility of the initial conditions.

We can show that \eqref{eq:quantile_reform_final_4} elicits a difference of convex formulation by subtracting $f_i(\overline{x}_j(0),\overline{U}_j)$ from both sides.
We also add slack variables to accommodate potentially infeasible initial conditions \cite{boyd_dc_2016}, \cite{horst2000}. 
When using a difference of convex program, Lemma \ref{lem1} guarantees a feasible but locally optimal solution.




\section{Experimental Results}\label{sec:results}

We demonstrate our algorithms in simulation on a multi-vehicle spacecraft navigation problem with two disturbances: one that is Gaussian (for validation), and one that is Cauchy (to demonstrate our method's capabilities).  
All computations were done on a 1.80GHz i7 processor with 16GB of RAM, using MATLAB, CVX \cite{cvx} and Gurobi \cite{gurobi}. Polytopic construction and plotting was done with MPT3 \cite{MPT3}. The system formulations are implemented in SReachTools \cite{sreachtools}. All code is available at \url{https://github.com/unm-hscl/shawnpriore-approximate-quantiles}.

Consider a scenario in which three satellites are stationed in low earth orbit.
Each satellite is tasked with reaching a terminal target set, while avoiding other satellites.   
The relative dynamics of each spacecraft, with respect to a known and fixed origin, are described by the Clohessy-Wilthire-Hill (CWH) equations
\cite{wiesel1989_spaceflight}
\begin{subequations}
\begin{align}
\ddot x - 3 \omega^2 x - 2 \omega \dot y &= \frac{F_x}{m_c} \label{eq:cwh:a}\\
\ddot y + 2 \omega \dot x & = \frac{F_y}{m_c} \label{eq:cwh:b}\\
\ddot z + \omega^2 z & = \frac{F_z}{m_c}. \label{eq:cwh:c}
\end{align}   
\label{eq:cwh}
\end{subequations}
with input $u_i = [ \begin{array}{ccc} F_x & F_y & F_z\end{array}]^\top$, mass $m_c$, and orbital rate $\omega = \sqrt{\frac{\mu}{R^3_0}}$, with gravitational constant $\mu$ and orbital radius $R_0$. 
We discretize (\ref{eq:cwh}) with a first-order hold, with 
sampling time $30$\si{\s}, and insert a disturbance process that captures model uncertainties, so that dynamics for vehicle $i$ are described by  
\begin{equation}
    \boldsymbol x_i(k+1) = A \boldsymbol x_i(k) + B \overline{u}_i(k) + \boldsymbol{w}_i(k) \label{eq:cwh_lin}
\end{equation}
with 
$\mathcal{U}_i = [-5,5]^3$, and time horizon $N=8$, corresponding to 4 minutes of operation.

The terminal sets $\mathcal T_i$ are $5\times 5 \times 5$m boxes centered around desired terminal locations in $x,y,z$ coordinates, with speeds bounded in all three directions by $[-0.01, 0.01]$m/s.  For collision avoidance, we presume that all satellites must remain at least $r=15$m away from each other, hence $S = \begin{bmatrix} I_{3} & 0_{3} \end{bmatrix}$ to extract the positions.  Violation thresholds for terminal sets and collision avoidance are $\alpha_{\mathcal T} = \alpha_r = 0.1$, respectively. 
\begin{align}
    \Prob \left\{ \bigcap_{i=1}^3 \boldsymbol x_i(N) \in \mathcal{T}_i \right\} &\geq 1-\alpha_{\mathcal T} \label{eq:terminal}\\
    \Prob \left\{ \bigcap_{k=1}^N \bigcap_{i,j=1}^{3} \left\| S \!\cdot\!\left(\boldsymbol x_i(k)\! - \! \boldsymbol x_j(k)\right)\right\| \geq r \right\} &\geq 1-\alpha_r  \label{eq:avoidance}
\end{align}

The performance objective is based on fuel consumption. 
\begin{equation}
    J(\overline{U}_1, \overline{U}_2, \overline{U}_3) = \sum^3_{i=1} \overline{U}_i^\top \overline{U}_i
\end{equation}

When approximating the numerical quantile, we presume  intervals $h=5\times 10^{-6}$, and maximum approximation error $\xi = 0.1$. For symmetric distributions, we set the instantiating point, $p_0$, to $0.5$ with known quantile $\Phi^{-1}(p_0) = 0$. For non-symmetric distributions, we set the instantiating point to $0.9$. Computation of $\Phi^{-1}(p_0)$ was completed with MATLAB's implementation of the incomplete gamma function for the Chi quantile. Analytical results were used to compute $\Phi^{-1}(p_0)$ for the sum of squared Cauchy random variables. Each quantile approximation used the first three derivatives of the pdf. Convergence criteria was defined as the difference of sequential outputs and the sum of slack variables both less than $10^{-8}$; difference of convex programs were limited to 100 iterations. 

\subsection{6D CWH with a Gaussian Disturbance}\label{ssec:results-normal}
We first consider a Gaussian noise with zero mean and covariance $\Sigma = \mathrm{diag}(10^{-4} \cdot I_3, 5 \times  10^{-8} \cdot I_3)$.
Once reformulated, the target set constraint has a Gaussian distribution and the collision avoidance has a Chi distribution with three degrees of freedom. As  
neither have an analytical expression for their quantile function, the use of standard tools or methods \cite{vinod2019piecewise,PrioreACC21} for Gaussian distributions is not viable.


For terminal constraint (\ref{eq:terminal}), formulation into (\ref{eq:prob_constraints_convex}) via \eqref{eq:demo1} results in 
$\eta_i$ that is a univariate Gaussian distribution with zero mean and variance $P_{i,\cdot} \, \Sigma(k) \, P_{i,\cdot}^\top$, with $\Sigma (k) = \sum_{i=0}^{k-1}(A^i)^\top \Sigma A^i$, such that
$g_i \eta_i  = P_{i,\cdot} \, \Sigma(k) \, P_{i,\cdot}^\top \eta_i$. 

For the collision avoidance constraint (\ref{eq:avoidance}), formulation into (\ref{eq:prob_constraints_reverse_convex}) via \eqref{eq:demo2} follows the derivation as in \cite[Thm 1]{PrioreACC21}, and results in
\begin{equation}
 \begin{split}
    &\left\| S\!\left(\boldsymbol x_i(k) - \boldsymbol x_j(k)\right)\right\| \\
    \equiv &\left\| S\!\left(A^k \overline{x}_{i-\!j}(0) + \mathcal{C}_u(k) \overline{U}_{i-\!j} + \mathcal{C}_w(k) \boldsymbol{W}_{i-\!j} \right)\right\| \\
    \geq &\left\| S\!\left(A^k \overline{x}_{i-\!j}(0) + \mathcal{C}_u(k) \overline{U}_{i-\!j}\right)\right\| - \left\| S \mathcal{C}_w(k) \boldsymbol{W}_{i-\!j} \right\|  \\
    \geq & \underbrace{\left\| \vphantom{\left(2S \Sigma(k) S^\top \right)^{\frac{1}{2}}} S\!\left(A^k \overline{x}_{i-\!j}(0) + \mathcal{C}_u(k) \overline{U}_{i-\!j}\right)\right\|}_{f_i(\overline{x}(0), \overline{U})} \!-\!\underbrace{ \left\| \left(2S \Sigma(k) S^\top \right)^{\frac{1}{2}} \boldsymbol \rho \right\|}_{g_i  \eta_i}
\end{split}   
\end{equation}
where the index $\cdot_{i-j}$ represents the difference between the two variables, respectively, and $\boldsymbol \rho$ is a multivariate Gaussian. By the compatibility of matrix norms, we obtain 
\begin{equation} 
g_i  \eta_i = \left\| \left(2S \Sigma(k) S^\top \right)^{\frac{1}{2}} \right\| \cdot \left\| \boldsymbol \rho \right\|
\end{equation}
where $\left\| \boldsymbol \rho \right\|$ 
follows a Chi distribution with three degrees of freedom.

We compare the proposed method with the mixed integer particle approach using two polytopic overapproximations of the collision avoidance constraint, based on the $L_{\infty}$ and $L_1$ norm.
We generated 10 disturbance samples to generate an open-loop controller. Note that different disturbance samples were used when generating the controller for either variant.

The resulting trajectories, costs, and computation times differ dramatically, as shown in
Figure \ref{fig:normal_trajectory} and Table \ref{table:results_time_gaussian}.
(Differences in the $z$-coordinates were minimal, so we only plot the $x$ and $y$ coordinates in Figure \ref{fig:normal_trajectory}.)  
To assess constraint satisfaction, we generated $10^5$ Monte-Carlo sample disturbances for each approach; the 
the $L_2$ distance between the mean positions at each time step are shown in Figure \ref{fig:normal_distance}.
Table \ref{table:results_constraints_gaussian} shows that while all three methods satisfied the collision avoidance constraint, neither particle control approach satisfied the terminal set constraint.

The proposed method performed two to three orders of magnitude faster than particle control. Given the significant increase in binary variables needed to perform particle control, this comes as no surprise. We attempted to increase the number of disturbance samples, however, we could not generate a solution in a under two hours. Conversely, the low number of disturbance samples is likely the cause for the poor performance with respect to the target set constraint. Given the random nature of the sampling process, ten samples is not enough to characterize the behaviour on a larger scale. 

Figures \ref{fig:normal_trajectory} and \ref{fig:normal_distance} show that the differences in avoidance regions impacted the results. With the $L_2$ collision avoidance region overapproximated by both the $L_{\infty}$ and $L_1$ regions, we

\begin{figure}[H]
    \includegraphics[width=0.95\linewidth]{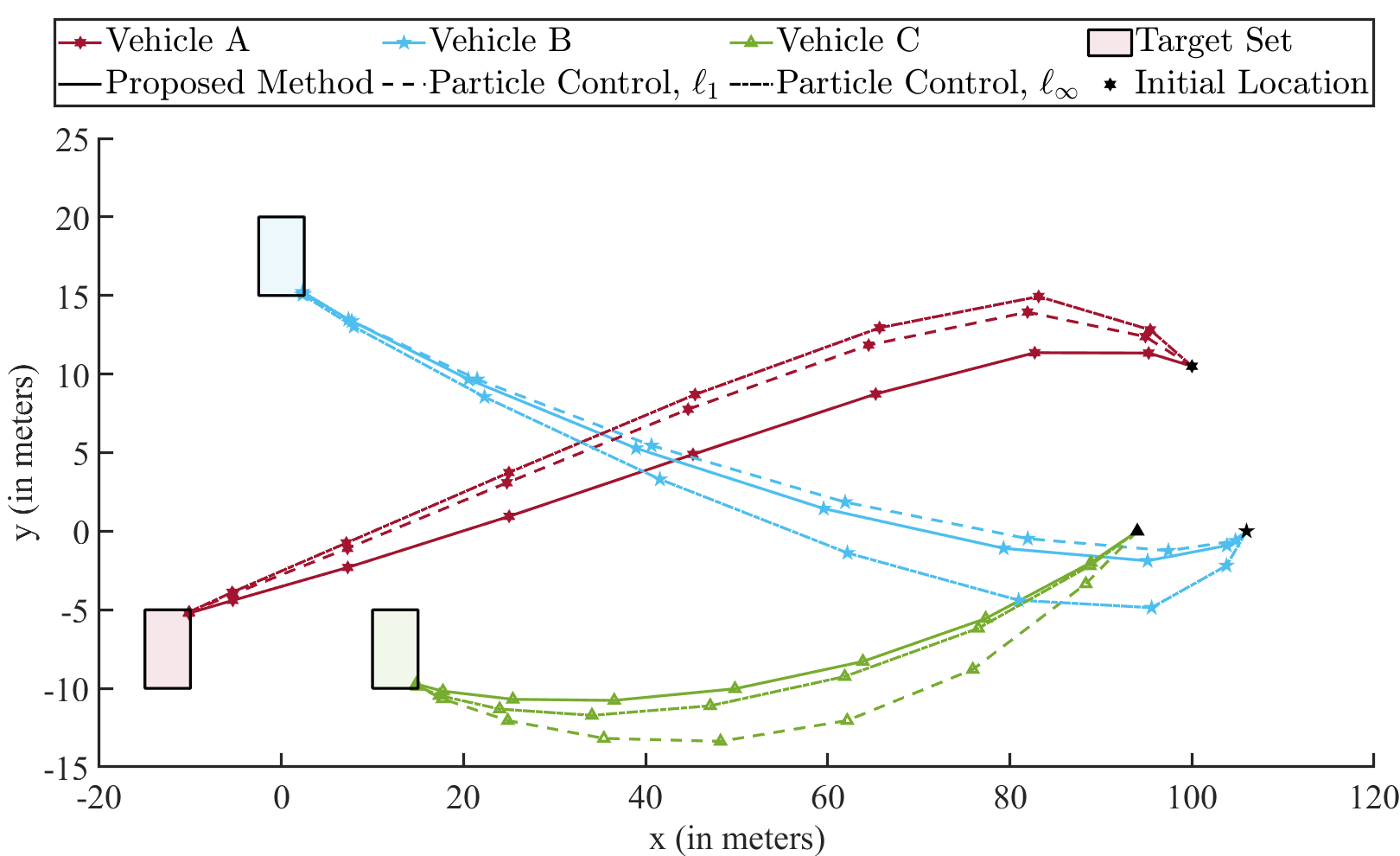}
    \centering
    \caption{Comparison of trajectories in $(x,y)$ coordinates from proposed method (solid) and particle control (dashed for $L_1$ norm; dotted for $L_\infty$ norm).}
    \label{fig:normal_trajectory}
\end{figure}

\begin{figure}[H]
    \includegraphics[width=0.95\linewidth]{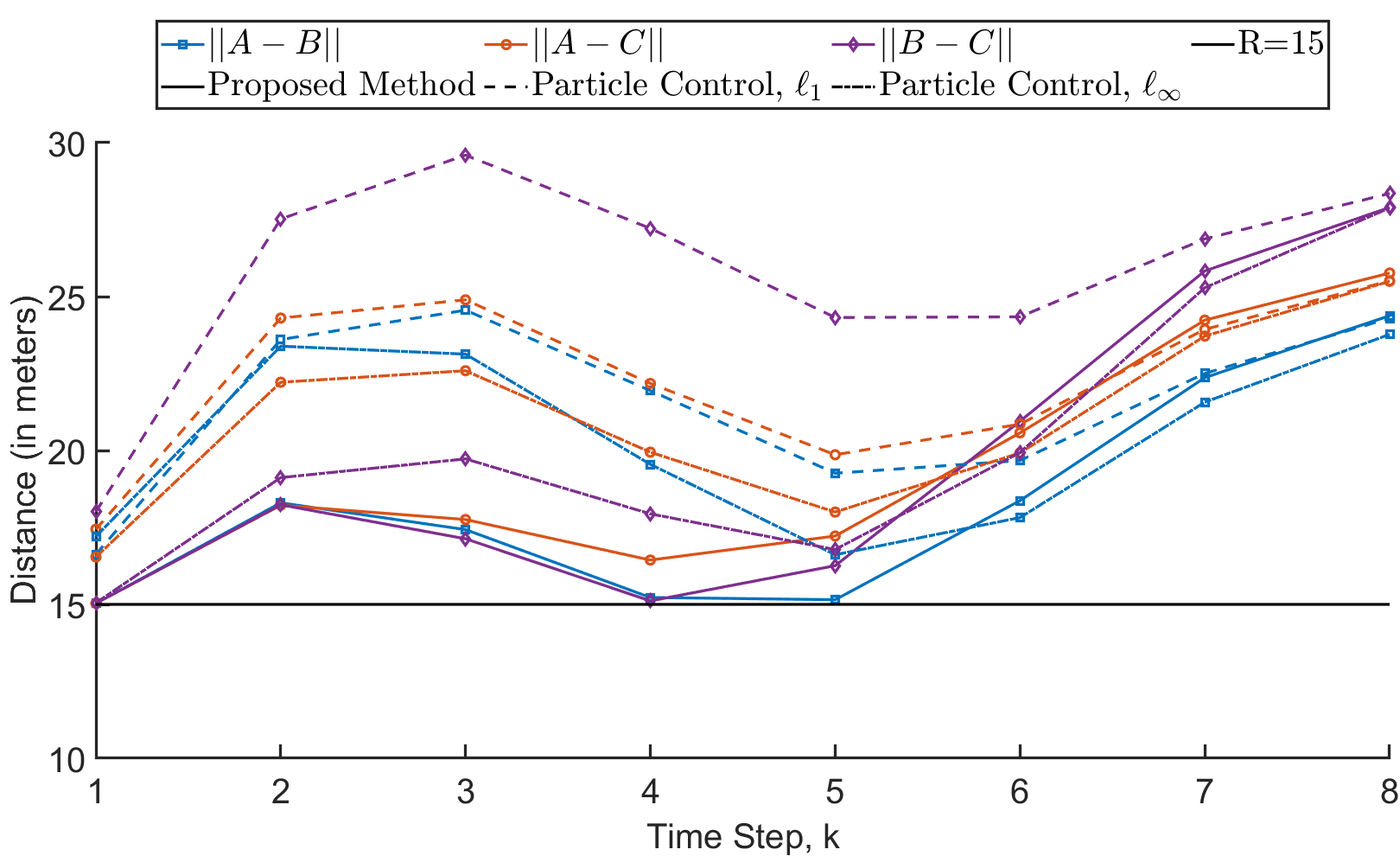}
    \centering
    \caption{Comparison of $L_2$ inter-satellite distances between proposed method (solid) and particle control (dashed for $L_1$ norm; dotted for $L_\infty$ norm).}
    \label{fig:normal_distance}
\end{figure}

\begin{table}[H]
\caption{Computation Time and Control Cost for CWH Dynamics with a Gaussian Disturbance.}
\centering
\begin{tabular}{|l||c|c|c|}
\hline
\multirow{2}{*}{Metric} & \multirow{2}{*}{\shortstack{Proposed \\ method}} & \multicolumn{2}{c|}{Particle control}\\
\cline{3-4}
& & $L_\infty$ & $L_1$ \\
\hline \hline
Computation Time (sec)      & 6.82          & 245.65   & 4199.10\\
\hline
$J(\overline{U}_1, \overline{U}_2, \overline{U}_3)$  & 92.04       & 102.79 & 117.56 \\
\hline
\end{tabular}
\label{table:results_time_gaussian}
\end{table}

\begin{table}[H]
\centering
\caption{Constraint Satisfaction (''SAT'') for CWH dynamics with Gaussian Disturbance, with $10^5$ Samples and Probabilistic Violation Threshold of $1-\alpha =0.9$.}
\begin{tabular}{|l||c|c|c|c|c|c|}
\hline
\multirow{2}{*}{Constraint} &  \multirow{2}{*}{\shortstack{Proposed \\ method}} & \multirow{2}{*}{SAT} & \multicolumn{4}{c|}{Particle control}\\
\cline{4-7}
& & & $L_\infty$ & SAT & $L_1$ & SAT\\
\hline \hline
\multirow{2}{*}{\shortstack[l]{Collision\\Avoidance}} & \multirow{2}{*}{0.9630} & \multirow{2}{*}{$\checkmark$} & \multirow{2}{*}{0.9997} & \multirow{2}{*}{$\checkmark$} & \multirow{2}{*}{1.0000} & \multirow{2}{*}{$\checkmark$} \\ 
&&&&&&\\
\hline
\multirow{2}{*}{\shortstack[l]{Terminal\\Set}} & \multirow{2}{*}{0.9127} & \multirow{2}{*}{$\checkmark$} & \multirow{2}{*}{0.2183} &  & \multirow{2}{*}{0.0993} & \\
&&&&&&\\
\hline
\end{tabular}
\label{table:results_constraints_gaussian}
\end{table}

\noindent expected the collision avoidance likelihood to be significantly higher than the proposed method. However, the sharp edges of the polytopes created control choices that led to more aggressive direction changes. This phenomena is apparent in the lack of smoothness in the particle control trajectories in Figure \ref{fig:normal_trajectory}. Similarly, the larger avoidance regions effectively increased the avoidance distance to $18\si{\m}$ for the $L_{\infty}$ particle control run and $23\si{\m}$ for the $L_{1}$ particle control run, as observed in Figure \ref{fig:normal_distance}. The additional distance had a distinct impact on the overall cost of each of these controllers.  To produce a closer comparison, we also used 14- and 26-faced polytopes, however neither resulted in a solution within a $24$ hour time frame. 

\subsection{4D CWH with Cauchy Disturbance} \label{ssec:results-cauchy}

We consider the planar CWH dynamics (\ref{eq:cwh:a}), (\ref{eq:cwh:b}) with a Cauchy disturbance that is parameterized with location as zero and scale elements $\Gamma(k)$,
\begin{equation}
    \Gamma(k)_j = 
    \begin{cases} 
        10^{-4} & \text{if } j \in \{4n + \{1,2\} \mid  n \in \mathbb{N}_{[0,k-1]}\} \\
        5\! \times\! 10^{-8} & \text{if } j \in \{4n+\{3,4\} \mid  n \in \mathbb{N}_{[0,k-1]}\}
    \end{cases}
\end{equation}
corresponding to position and velocity elements, respectively.

For the terminal set constraint, because the set is axis-aligned, it can be written as function of a single Cauchy random variable,

\begin{figure}[H]
    \includegraphics[width=0.925\linewidth]{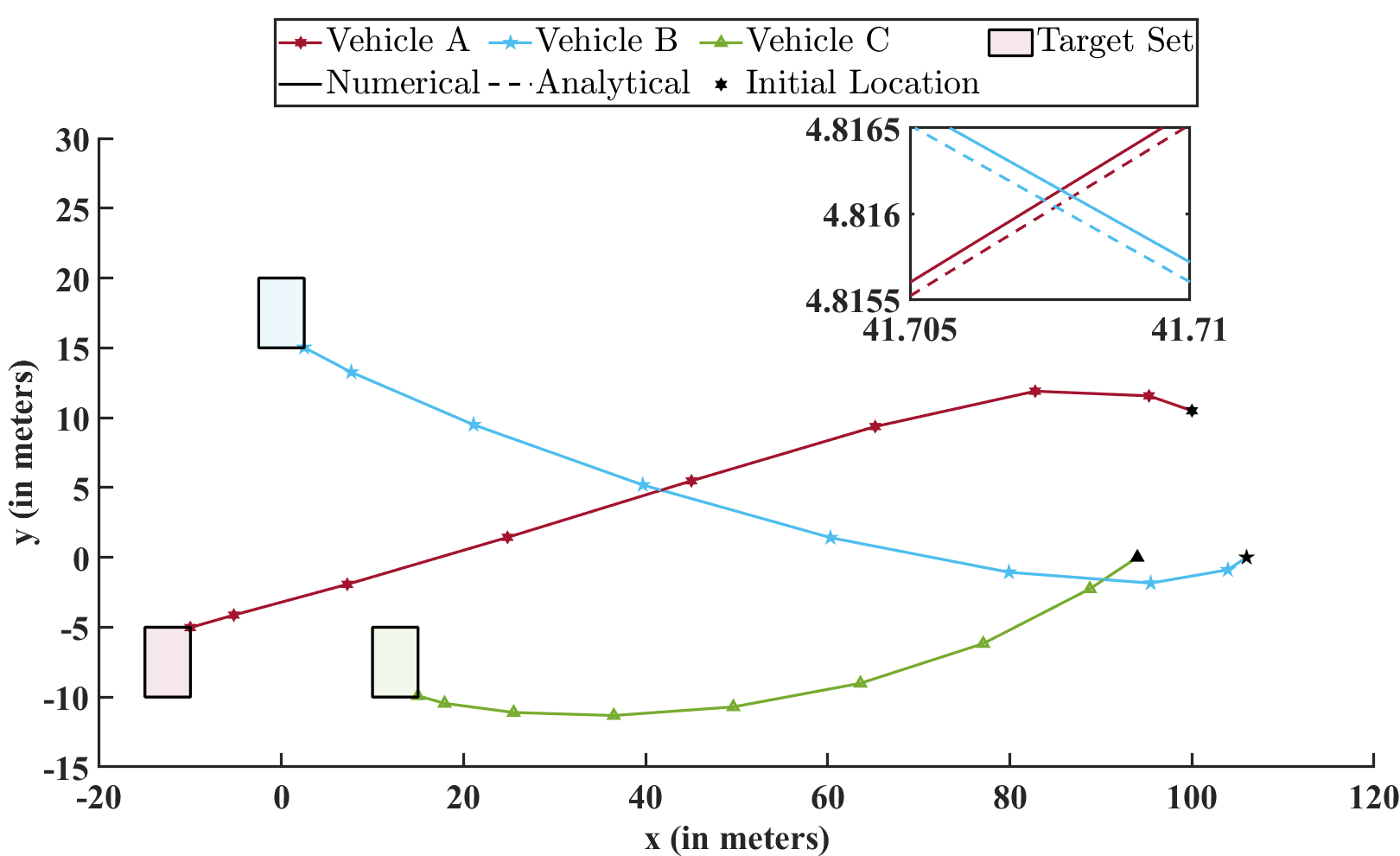}
    \centering
    \caption{Comparison of trajectories in $(x,y)$ coordinates for proposed method (solid) and with an analytic quantile (dashed). The trajectories are nearly indistinguishable as seen in the magnified subplot.}
    \label{fig:cauchy_trajectory}
\end{figure}

\begin{figure}[H]
    \includegraphics[width=0.925\linewidth]{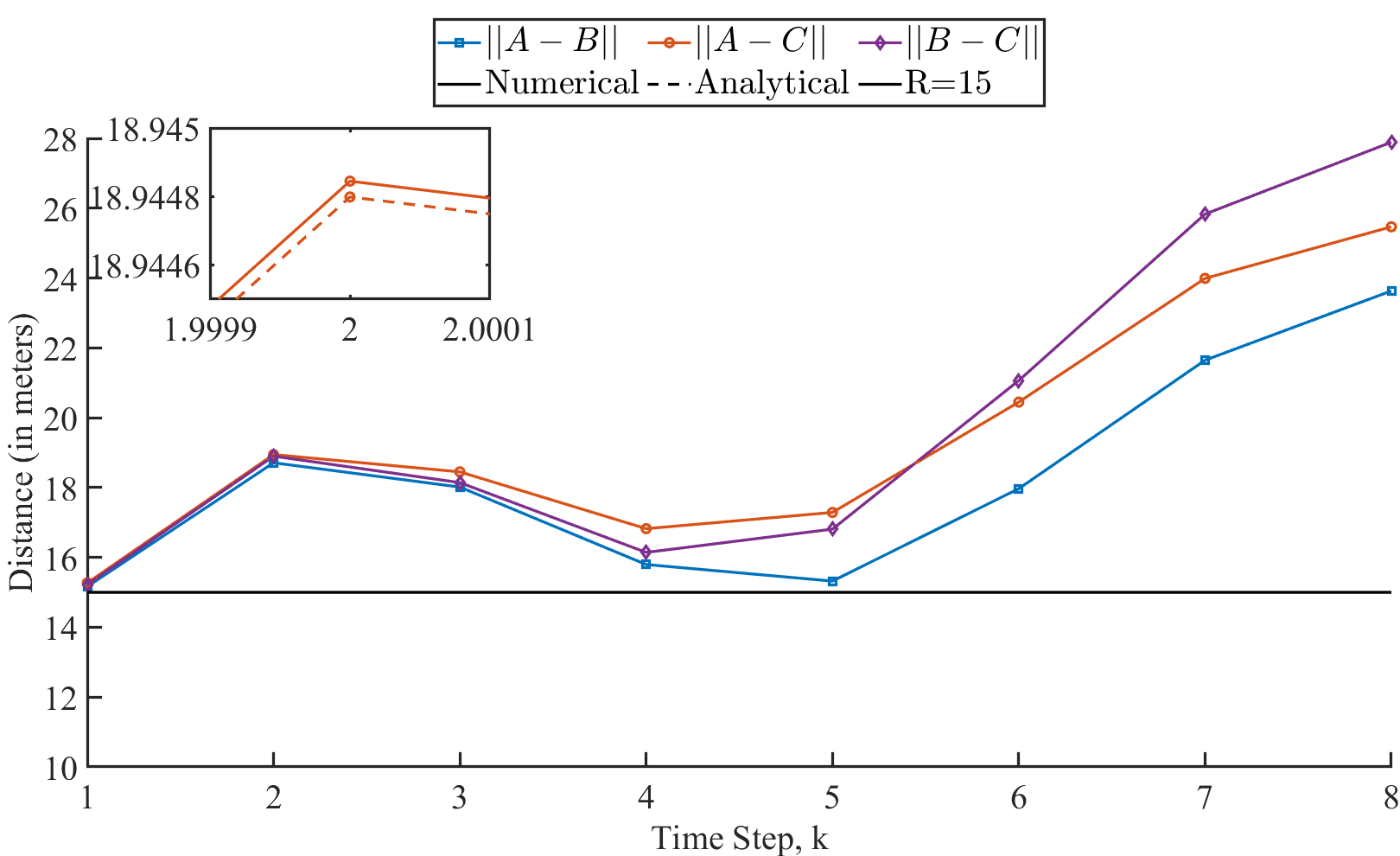}
    \centering
    \caption{Comparison of inter-satellite distances between proposed method (solid) and with an analytic quantile (dashed). The inter-satellite distances are nearly indistinguishable as seen in the magnified subplot.}
    \label{fig:cauchy_distance}
\end{figure}

\noindent \begin{equation}\label{eq:cauchy_term}
\setlength\arraycolsep{2pt}
\begin{array}{rrl}
    & \overline{e}_{m}^\top \, \boldsymbol{x}_j &\leq \overline{q}_{i} \\
    \Leftrightarrow & \underbrace{ \overline{e}_{m}^\top \left(A^k \overline{x}_j(0) +  \mathcal{C}_u(k) \overline{U}_j \right)}_{f_i(\overline{x}_j(0), \overline{U}_j)} \!+\! \underbrace{ \, \overline{e}_{m}^\top  \,\mathcal{C}_w(k) \boldsymbol{W}}_{g_i \eta_i} &\leq \underbrace{\overline{q}_{i}}_{c_i} 
\end{array}
\end{equation}
where $\overline{e}_m$ is the $m$-th column of an appropriately sized identity matrix. Here,
\begin{equation}
    g_i \eta_i = \overline{e}_{m}^\top\mathcal{C}_w(k) \Gamma(k)  \eta_i \\
\end{equation} 
and $\eta_i$ has a standard Cauchy distribution. Note that without a direct covariance measure between two Cauchy distributions, \eqref{eq:cauchy_term} cannot be easily extended to account for polytopic constraints of more than one dimension.

For the collision avoidance constraint, the main challenge arises from the coupling across random variables that arises from taking a norm. By falsely considering each dimension as independent, we under approximate the norm
\begin{align}
    &\left\| S\!\left(\boldsymbol x_i(k) - \boldsymbol x_j(k)\right)\right\| \\
    \equiv &\left\| S\!\left(A^k \overline{x}_{i-\!j}(0) \!+\! \mathcal{C}_u(k) \overline{U}_{i-\!j} + \mathcal{C}_w(k) \boldsymbol{W}_{i-\!j} \right)\right\|  \nonumber\\
    \geq &\left\| S\!\left(A^k \overline{x}_{i-\!j}(0) \!+\! \mathcal{C}_u(k) \overline{U}_{i-\!j}\right)\right\| \!-\! \left\| S \mathcal{C}_w(k) \boldsymbol{W}_{i-\!j} \right\| \nonumber \\
    \geq & \underbrace{\left\| S\!\left(A^k \overline{x}_{i-\!j}(0) \!+\! \mathcal{C}_u(k) \overline{U}_{i-\!j}\right)\right\|}_{f_i(\overline{x}(0), \overline{U})} \!-\! \underbrace{ \mathrm{max}\left(S \mathcal{C}_w(k) \Gamma(k) \right) \left\| \boldsymbol \rho \right\|}_{g_i  \eta_i} \nonumber
\end{align}
where $\boldsymbol \rho$ is a 2D vector consisting of independent and identically distributed standard Cauchy variables, and $\mathrm{max}(\cdot)$ returns the element of the argument vector with the maximum value. 
Note that the random variable of interest is $\left\| \boldsymbol \rho \right\|^2$, which we can show through convolution to have closed-form expressions for the pdf, cdf, and the quantile,

\noindent 
\begin{subequations}
\begin{align}
    \phi_{\left\| \boldsymbol \rho \right\|^2}(x) &= \frac{2}{\pi \sqrt{1+x}(2+x)} \\
    \Phi_{\left\| \boldsymbol \rho \right\|^2}(x) &= \textstyle\frac{4}{\pi}\arctan\left(\sqrt{1+x}\right) - 1\\
    \Phi^{-1}_{\left\| \boldsymbol \rho \right\|^2}(p) &= \tan^2 \left( \textstyle\frac{\pi}{4}(1+p)\right) -1
\end{align}
\end{subequations}

Figures \ref{fig:cauchy_trajectory} and \ref{fig:cauchy_distance}, and Tables \ref{table:results_time_cauchy} and \ref{table:results_constraints_cauchy}, show that the proposed method with the numerical quantile performed nearly identically to the proposed method with an analytical quantile. We observed differences on the order of $10^{-4}$, as shown in the subplot of Figure \ref{fig:cauchy_trajectory}.  This is likely attributed to

\begin{table}[H]
\caption{Computation Time and Control Cost for CWH Dynamics with Cauchy Disturbance.}
\centering
\begin{tabular}{|l||l|l|}
\hline
\multirow{2}{*}{Metric} & \multicolumn{2}{c|}{Proposed method}\\
\cline{2-3}
&  Numerical  & Analytical\\
\hline \hline
Computation Time (sec)  & 153.86   & 157.98\\
\hline
$J(\overline{U}_1, \overline{U}_2, \overline{U}_3)$ & 93.11 & 93.11 \\
\hline
\end{tabular}
\label{table:results_time_cauchy}
\end{table}

\begin{table}[H]
\centering
\caption{Constraint Satisfaction (``SAT'') for CWH dynamics with Cauchy Disturbance, with $10^5$ Samples and Probabilistic Violation Threshold of $1-\alpha = 0.9$.}
\begin{tabular}{|l||c|c|c|c|}
\hline
\multirow{2}{*}{Constraint} & \multicolumn{4}{c|}{Proposed method}\\
\cline{2-5}
 & Numerical & SAT & Analytical & SAT\\
\hline \hline
Collision Avoidance & 0.9979 & $\checkmark$ & 0.9978 & $\checkmark$  \\
\hline
Terminal Set & 0.9086 & $\checkmark$ & 0.9085 & $\checkmark$ \\
\hline
\end{tabular}
\label{table:results_constraints_cauchy}
\end{table}

\noindent our choice of a very small interval $h$, which yielded a highly accurate quantile approximation.

\section{Conclusion}\label{sec:conclusion}
We proposed a method for chance constrained stochastic optimal control of LTI systems that exploits a numerical approximation of the quantile function.  Our approach is amenable to distributions whose pdfs are sufficiently smooth.  We demonstrated our approach on a multi-vehicle satellite control problem with Gaussian and Cauchy disturbances.

\appendix

\subsection{Reformulation of Constraints}
\label{sec:reformulation}

\subsubsection{Polytopic constraint set}
Consider a terminal set constraint, captured by \eqref{eq:constraint_t} as
\begin{equation}
    \Prob \{\boldsymbol x(N)  \in  \mathcal T_N \} \geq  1-\alpha,
\end{equation}
whose argument can be rewritten in halfspace form as
%
$P \boldsymbol x_j (N) \leq \overline{p}$
for some $P \in \R^{L \times n}$, $\overline{p} \in \R^L$, for $L$ the number of half-space constraints in $\mathcal{T}_k$.  Expanding into an intersection of scalar constraints, we obtain
\begin{equation}
\label{eq:demo1}
\setlength\arraycolsep{2pt}
\begin{array}{rrl}
    &P  \boldsymbol{x}_j(k) &\leq \overline{p} \\
    \Leftrightarrow & \displaystyle \bigcap_{i=1}^L P_{i,\cdot}  \boldsymbol{x}_j(k) &\leq \overline{p}_{i} \\
    \Leftrightarrow \!& \!\displaystyle \bigcap_{i=1}^L \underbrace{P_{i,\cdot}\left( A^k \overline{x}_j(0) + \mathcal{C}_u(k) \overline{U}_j \right)}_{f_i(\overline{x}_j(0), \overline{U}_j)} + \underbrace{P_{i,\cdot} \,\mathcal{C}_w(k) \boldsymbol{W}}_{g_i \eta_i}  &\leq \underbrace{\overline{p}_{i}}_{c_i} 
\end{array}
\end{equation}

where $P_{i,\cdot}$ is the $i^{\text{th}}$ row of the matrix $P$, meaning that
\begin{equation}
         \Prob \{\boldsymbol x(N)  \in  \mathcal T_N \}  \geq  1-\alpha 
        \Leftrightarrow \: \Prob \: \left\{\eqref{eq:demo1}\right\}  \geq  1-\alpha
\end{equation}
as in (\ref{eq:prob_constraints_convex}), with 
random variable $\eta$ that is a linear transformation of $w$.

\subsubsection{Norm based constraint set}
For probabilistic collision avoidance between vehicles $i$ and $j$ with minimum $L_2$ distance $r \in \mathbb R_+$ and violation threshold $1-\alpha$, we consider  
\begin{equation}
    \Prob\{\|\boldsymbol x_i(k) - \boldsymbol x_j(k)\| \geq r\} \geq  1-\alpha 
\end{equation}
Using the reverse triangle inequality, we obtain
\begin{subequations}\label{eq:demo2}
\begin{align}
    &\|\boldsymbol x_i(k) - \boldsymbol x_j(k)\| \label{eq:demo_2_init} \\
    =&\|A^k[\overline{x}_i(0) \! - \! \overline{x}_j(0)] \! + \! \mathcal{C}_u(k) [\overline{U}_{ i} \! - \! \overline{U}_{ j}] \! + \! \mathcal{C}_w(k) (\boldsymbol W_{ i}\! - \! \boldsymbol W_{ j})\|\\
    \geq&\underbrace{\|A^k[\overline{x}_i(0) \! - \! \overline{x}_j(0)] \! + \! \mathcal{C}_u(k) [\overline{U}_{ i} \! - \! \overline{U}_{ j}] \|}_{f_i(\overline{x}_j(0), \overline{U}_j)} \! - \! \underbrace{\|\mathcal{C}_w(k) (\boldsymbol W_{ i}\! - \! \boldsymbol W_{ j})\|}_{g_i \eta_i} \label{eq:demo_2_final}
\end{align}
\end{subequations}
hence
\begin{equation}
    \Prob\{\|\boldsymbol x_i(k) - \boldsymbol x_j(k)\| \geq r\}  \geq  1-\alpha \\
    \Leftarrow \Prob\{\eqref{eq:demo_2_final} \geq r\}  \geq 1-\alpha
\end{equation}
as in (\ref{eq:prob_constraints_reverse_convex}). Note that this is a one-way implication. 

\bibliography{refs}

\begin{thebibliography}{10}

\bibitem{mesbah2016stochastic}
A.~Mesbah, ``Stochastic model predictive control: An overview and perspectives
  for future research,'' {\em IEEE Ctrl. Syst. Mag.}, vol.~36, no.~6,
  pp.~30--44, 2016.

\bibitem{calafiore2006scenario}
G.~Calafiore and M.~Campi, ``The scenario approach to robust control design,''
  {\em {IEEE} Trans. Autom. Control}, vol.~51, no.~5, pp.~742--753, 2006.

\bibitem{blackmore2011chance}
L.~Blackmore, M.~Ono, and B.~Williams, ``Chance-constrained optimal path
  planning with obstacles,'' {\em {IEEE} Trans. Robot.}, vol.~27, no.~6,
  pp.~1080--1094, 2011.

\bibitem{campi2011sampling}
M.~Campi and S.~Garatti, ``A sampling-and-discarding approach to
  chance-constrained optimization: Feasibility and optimality,'' {\em J. Optim
  Theory Appl.}, vol.~148, no.~2, pp.~257--280, 2011.

\bibitem{care2014fast}
A.~Car{\`e}, S.~Garatti, and M.~C. Campi, ``Fast—fast algorithm for the
  scenario technique,'' {\em Operations Res.}, vol.~62, no.~3, pp.~662--671,
  2014.

\bibitem{Campi2018TAC}
M.~C. Campi, S.~Garatti, and F.~A. Ramponi, ``A general scenario theory for
  nonconvex optimization and decision making,'' {\em IEEE Transactions on
  Automatic Control}, vol.~63, no.~12, pp.~4067--4078, 2018.

\bibitem{nemirovski2006convex}
A.~Nemirovski and A.~Shapiro, ``Convex approximations of chance constrained
  programs,'' {\em J. Optimization}, vol.~17, pp.~969--996, 2006.

\bibitem{calafiore2006distributionally}
G.~Calafiore and L.~Ghaoui, ``On distributionally robust chance-constrained
  linear programs,'' {\em J. Optim Theory Appl.}, vol.~130, no.~1, pp.~1--22,
  2006.

\bibitem{paulson2017stochastic}
J.~Paulson, E.~Buehler, R.~Braatz, and A.~Mesbah, ``Stochastic model predictive
  control with joint chance constraints,'' {\em Int'l J. Ctrl.}, pp.~1--14,
  2017.

\bibitem{oldewurtel2014stochastic}
F.~Oldewurtel, C.~Jones, A.~Parisio, and M.~Morari, ``Stochastic model
  predictive control for building climate control,'' {\em {IEEE} Trans. Control
  Syst. Technol.}, vol.~22, no.~3, pp.~1198--1205, 2014.

\bibitem{ono2008iterative}
M.~Ono and B.~Williams, ``Iterative risk allocation: A new approach to robust
  model predictive control with a joint chance constraint,'' in {\em IEEE Conf.
  Dec. \& Control}, pp.~3427--3432, 2008.

\bibitem{vitus_feedback_2011}
M.~P. Vitus and C.~J. Tomlin, ``On feedback design and risk allocation in
  chance constrained control,'' in {\em 2011 50th IEEE Conference on Decision
  and Control and European Control Conference}, pp.~734--739, 2011.

\bibitem{Sivaramakrishnan2021TAC}
V.~Sivaramakrishnan, A.~P. Vinod, and M.~Oishi, ``Convexified open-loop
  stochastic optimal control for linear non-gaussian systems,'' {\em
  arXiv:2010.02101}, 2021.

\bibitem{vinod2019piecewise}
A.~P. Vinod, V.~Sivaramakrishnan, and M.~Oishi, ``Piecewise-affine
  approximation-based stochastic optimal control with gaussian joint chance
  constraints,'' in {\em Proc. Amer. Ctrl. Conf.}, pp.~2942--2949, 2019.

\bibitem{boyd_dc_2016}
T.~{Lipp} and S.~{Boyd}, ``Variations and extension of the convex–concave
  procedure,'' {\em Optimization and Eng.}, vol.~17, pp.~263–--287, 2016.

\bibitem{PrioreACC21}
S.~Priore, A.~Vinod, V.~Sivaramakrishnan, C.~Petersen, and M.~Oishi,
  ``Stochastic multi-satellite maneuvering with constraints in an elliptical
  orbit,'' in {\em Proc. Amer. Ctrl. Conf.}, pp.~4261--4268, 2021.

\bibitem{abramowitz1964handbook}
M.~Abramowitz and I.~A. Stegun, {\em Handbook of mathematical functions with
  formulas, graphs, and mathematical tables}, vol.~55.
\newblock US Government printing office, 1964.

\bibitem{kafadar1988bidec}
K.~Kafadar and J.~W. Tukey, ``A bidec t table,'' {\em Journal of the American
  Statistical Association}, vol.~83, no.~402, pp.~532--539, 1988.

\bibitem{wichura1988algorithm}
M.~J. Wichura, ``Algorithm as 241: The percentage points of the normal
  distribution,'' {\em Journal of the Royal Statistical Society. Series C
  (Applied Statistics)}, vol.~37, no.~3, pp.~477--484, 1988.

\bibitem{Yu2017}
C.~Yu and D.~Zelterman, ``A general approximation to quantiles,'' {\em
  Communications in Statistics - Theory and Methods}, vol.~46, no.~19,
  pp.~9834--9841, 2017.

\bibitem{horst2000}
R.~Horst, P.~M. Pardalos, and N.~V. Thoai, {\em Introduction to global
  optimization}.
\newblock Springer Science \& Business Media, 2000.

\bibitem{cvx}
M.~Grant and S.~Boyd, ``{CVX}: Matlab software for disciplined convex
  programming, version 2.1.'' \url{http://cvxr.com/cvx}, Mar. 2014.

\bibitem{gurobi}
L.~Gurobi~Optimization, ``Gurobi optimizer reference manual,'' 2020.

\bibitem{MPT3}
M.~Herceg, M.~Kvasnica, C.~Jones, and M.~Morari, ``{Multi-Parametric Toolbox
  3.0},'' in {\em Proc. Euro. Ctrl. Conf.}, (Z\"urich, Switzerland),
  pp.~502--510, July 17--19 2013.

\bibitem{sreachtools}
A.~Vinod, J.~Gleason, and M.~K. Oishi, ``{S}{R}each{T}ools: A {MATLAB}
  {S}tochastic {R}eachability {T}oolbox,'' in {\em Proc. Hybrid Sys.: Comp. \&
  Ctrl}, pp.~33 -- 38, April 2019.
\newblock \url{https://sreachtools.github.io}.

\bibitem{wiesel1989_spaceflight}
W.~Wiesel, {\em Spaceflight Dynamics}.
\newblock New York: McGraw-Hill, 1989.

\end{thebibliography}
\bibliographystyle{ieeetr}
\end{document}